\def\@cite#1#2{[{{\bfseries #1}\if@tempswa , #2\fi}]}
\renewcommand{\section}{%
\@startsection{section}{1}{\z@}
{0.5truecm plus -1ex minus -.2ex}%
{1.0ex plus .2ex}{\bfseries\large}}
\def\@seccntformat#1{\csname the#1\endcsname.\ }
\numberwithin{equation}{section} 
\newtheorem{thm}{Theorem}[section]
\newtheorem{lem}[thm]{Lemma}
\theoremstyle{definition}
\newtheorem{remark}{Remark}[section]
\newcommand{\ep}{\varepsilon}
\newcommand{\pa}{\partial}
\newcommand{\Rn}{\mathbb{R}^n}
\newcommand{\uast}{u^{\ast}}
\newcommand{\vast}{v^{\ast}}
\newcommand{\wast}{w^{\ast}}
\newcommand{\ol}[1]{\overline{#1}}
\newcommand{\lp}[2]{\Vert{#2}\Vert_{L^{#1}(\Omega)}}
\newcommand{\cd}{(\cdot,t)}
\newcommand{\into}{\int_\Omega}
\begin{document}
\footnote[0]
    {2010{\it Mathematics Subject Classification}\/. 
    Primary: 35K51; Secondary: 92C17, 35B40.
    }
\footnote[0]
    {\it Key words and phrases\/: 
    chemotaxis; 
    Lotka--Volterra; 
    global existence; stabilization. 
    }
\begin{center}
    \Large{{\bf 
    Improvement of conditions for 
    asymptotic stability 
    in a two-species chemotaxis-competition model \\
    with signal-dependent sensitivity
    }}
\end{center}
\vspace{5pt}
\begin{center}
    Masaaki Mizukami 
   \footnote[0]{
    E-mail: 
    {\tt masaaki.mizukami.math@gmail.com} 
    }\\
    \vspace{12pt}
    Department of Mathematics, 
    Tokyo University of Science\\
    1-3, Kagurazaka, Shinjuku-ku, 
    Tokyo 162-8601, Japan\\
    \vspace{2pt}
\end{center}
\begin{center}    
    \small \today
\end{center}

\vspace{2pt}
\newenvironment{summary}
{\vspace{.5\baselineskip}\begin{list}{}{%
     \setlength{\baselineskip}{0.85\baselineskip}
     \setlength{\topsep}{0pt}
     \setlength{\leftmargin}{12mm}
     \setlength{\rightmargin}{12mm}
     \setlength{\listparindent}{0mm}
     \setlength{\itemindent}{\listparindent}
     \setlength{\parsep}{0pt}
     \item\relax}}{\end{list}\vspace{.5\baselineskip}}
\begin{summary}
{\footnotesize {\bf Abstract.}
This paper deals with the two-species 
chemotaxis-competition system 
\begin{equation*}
  \begin{cases}
    u_t=d_1\Delta u - \nabla \cdot (u \chi_1(w)\nabla w)
    +\mu_1 u(1-u-a_1 v) 
    & {\rm in} \ \Omega \times (0,\infty), \\
    v_t=d_2\Delta v - \nabla \cdot (v \chi_2(w)\nabla w)
    +\mu_2 v(1-a_2u-v) 
    & {\rm in} \ \Omega \times (0,\infty), \\
    w_t=d_3\Delta w + \alpha u + \beta v - \gamma w  
    & {\rm in} \ \Omega \times (0,\infty),
   \end{cases}
\end{equation*}
where $\Omega$ is a bounded domain in $\Rn$ 
with smooth boundary $\pa \Omega$, $n\ge 2$; 
$\chi_i$ are functions satisfying some conditions. 
About this problem, 
Bai--Winkler \cite{Bai-Winkler_2016} 
first obtained asymptotic stability in \eqref{cp} 
under some conditions 
in the case that $a_1,a_2\in (0,1)$. 
Recently, the conditions assumed in 
\cite{Bai-Winkler_2016} were improved (\cite{Mizukami_DCDSB}); 
however, there is a gap between 
the conditions assumed in 
\cite{Bai-Winkler_2016} and \cite{Mizukami_DCDSB}.
The purpose of this work is to 
improve the conditions assumed in 
the previous works 
for asymptotic behavior 
in the case that $a_1,a_2\in (0,1)$. 
}
\end{summary}
\vspace{10pt} 

\newpage
%
%

\section{Introduction}

%
%
This paper presents improvement of 
\cite{Bai-Winkler_2016,Mizukami_DCDSB}. 
In this paper we consider 
the two-species chemotaxis 
system with competitive kinetics 
\begin{equation}\label{cp}
  \begin{cases}
    u_t=d_1\Delta u - \nabla \cdot (u\chi_1(w) \nabla w)
    +\mu_1 u(1-u-a_1 v), 
    & x\in\Omega,\ t>0, 
\\[1mm]
    v_t=d_2\Delta v - \nabla \cdot (v\chi_2(w) \nabla w)
    +\mu_2 v(1-a_2u-v), 
    & x\in\Omega,\ t>0,  
\\[1mm]	
    w_t=d_3\Delta w + \alpha u + \beta v - \gamma w, 
    & x\in\Omega,\ t>0, 
\\[1mm]
    \nabla u\cdot \nu=\nabla v\cdot \nu = \nabla w\cdot \nu = 0, 
    & x\in\pa \Omega,\ t>0,
\\[1mm]
    u(x,0)=u_0(x),\ v(x,0)=v_0(x),\ w(x,0)=w_0(x)
    & x\in\Omega,
  \end{cases}
\end{equation}
where $\Omega$ is a bounded domain in $\Rn$ 
($n\ge 2$) 
with smooth boundary $\pa \Omega$ 
and $\nu$ 
is 
the 
outward normal vector to $\pa\Omega$; 
$d_1,d_2,d_3,\mu_1,\mu_2, 
a_1,a_2$ 
and $\alpha,\beta,\gamma$ 
are positive constants; $\chi_1,\chi_2,u_0,v_0,w_0$ are
assumed to be nonnegative functions.
The unknown functions $u(x,t)$ and $v(x,t)$  
represent the population densities of 
two species and 
$w(x,t)$ shows the concentration of the 
chemical substance 
at place $x$ and time $t$. 

The problem \eqref{cp}, 
which is proposed by Tello--Winkler \cite{Tello_Winkler_2012}, 
is a problem 
on account of the influence of chemotaxis, diffusion, 
and the Lotka--Volterra competitive kinetics, 
i.e., with coupling coefficients $a_1,a_2>0$ in 
\begin{align}\label{L-V}
  u_t = u(1-u-a_1 v), \quad v_t = v(1-a_2u-v). 
\end{align}
The mathematical difficulties of the problem \eqref{cp} 
are to deal with the chemotaxis term $\nabla \cdot (u\nabla w)$ 
and the competition term $u (1-u-a_1v)$. 
To overcome these difficulties, firstly, 
the parabolic-parabolic-elliptic problem 
(i.e., $w_t$ is replaced with $0$ in \eqref{cp}) 
was studied and 
some conditions for global existence and 
stabilization in \eqref{cp} were established 
(\cite{Black-Lankeit-Mizukami_01,
stinner_tello_winkler,Tello_Winkler_2012}). 
In the parabolic-parabolic-elliptic case 
global existence of classical solutions 
to \eqref{cp} and their asymptotic behavior 
were obtained in the case that $a_1,a_2\in (0,1)$ 
(\cite{Black-Lankeit-Mizukami_01,Tello_Winkler_2012})
and the case that $a_1 > 1 > a_2$ (\cite{stinner_tello_winkler}). 
Recently, 
these results which give 
global existence and stabilization in \eqref{cp} 
were improved in some cases (\cite{Mizukami_PPEpro}). 

On the other hand, in general, 
the fully parabolic problem \eqref{cp} is a more 
difficult problem than the 
parabolic-parabolic-elliptic case; 
because we cannot use the relation 
\[
\Delta w = \alpha u + \beta v - \gamma w. 
\]
About this problem, global existence and boundedness 
were shown in the 2-dimensional case 
(\cite{Bai-Winkler_2016}) and the $n$-dimensional 
case (\cite{Lin-Mu-Wang}). 
Moreover, in the case that $a_1,a_2\in (0,1)$, 
Bai--Winkler \cite{Bai-Winkler_2016} obtained 
asymptotic stability in \eqref{cp} 
under the conditions 
\begin{align}\label{condi;Bai-Winkler} 
  \mu_1>\frac{d_2\chi_1^2u^\ast}{\frac{4a_1\gamma (1-a_1a_2)d_1d_2d_3}{(a_1\alpha^2+a_2\beta^2-2a_1a_2\alpha\beta)}-\frac{d_1a_1\chi_2^2v^\ast}{4\mu_2a_2}}, 
\quad 
  \mu_2>\frac{\chi_2^2v^\ast (a_1\alpha^2+a_2\beta^2-2a_1a_2\alpha\beta)}{16d_2d_3a_2\gamma (1-a_1a_2)}. 
\end{align}
Recently, in \cite{Mizukami_DCDSB}, 
the conditions \eqref{condi;Bai-Winkler} were improved; 
asymptotic behavior of solutions holds 
when there exists $\delta_1>0$ satisfying 
$4\delta_1-a_1a_2(1+\delta_1)^2>0$, 
\begin{align}\label{condi;Mizukami}
  &\mu_1
  >
  \frac{M_1^2u^\ast(1+\delta_1)(\alpha_2^2a_1\delta_1
    +\beta_2^2a_2-\alpha_1\beta_1a_1a_2(1+\delta_1))}
  {4a_1d_1d_3\gamma(4\delta_1-a_1a_2(1+\delta_1)^2)},
\\\label{condi;Mizukami2}
    &\mu_2 >
    \frac{M_2^2v^\ast(1+\delta_1)
    (\alpha_2^2a_1\delta_1
    +\beta_2^2a_2-\alpha_1\beta_1a_1a_2(1+\delta_1))}
    {4a_2d_2d_3\gamma(4\delta_1-a_1a_2(1+\delta_1)^2)},
\end{align}
where $M_1,M_2>0$ are some constants satisfying 
$\chi_1(s) \le M_1$, $\chi_2(s)\le M_2$ 
for all $s\ge 0$. 
%
Here we note that 
the conditions 
\eqref{condi;Bai-Winkler} and \eqref{condi;Mizukami}--\eqref{condi;Mizukami2} 
can be rewritten as 
\begin{align}\label{rewrite;Bai-Winkler}
\left(\frac{\uast \chi_1^2}{4d_1d_3a_1\mu_1},
  \frac{\vast \chi_2^2}{4d_2d_3a_2\mu_2}
  \right) 
  &\in 
    \left\{
    (s,t)\in \mathbb{R}^2 \,|\, 
    s,t \ge 0,\ s + t < 
    f(1)
    \right\}
\end{align}
and 
\begin{align}\notag 
\left(\frac{\uast M_1^2}{4d_1d_3a_1\mu_1},
  \frac{\vast M_2^2}{4d_2d_3a_2\mu_2}
  \right) 
  &\in \bigcup_{q\in I}
  \left\{
    (s,t)\in \mathbb{R}^2 \,\Big|\, 
    0\le s< \frac{f(q)}{1+q},\ 
    0\le t < \frac{f(q)}{1+q}
    \right\}
    \\\label{rewrite;Mizukami} &
    = \left\{
    (s,t)\in \mathbb{R}^2 \,\Big|\, 
    0\le s< \frac{f(q_0)}{1+q_0},\ 
    0\le t < \frac{f(q_0)}{1+q_0}
    \right\},
\end{align}
respectively, 
where 
\begin{align}\label{def;Iandf}
  I:= 
   \{q>0\,|\, 
   4q - (1+q)^2 a_1a_2 > 0\}, 
\quad 
  f(q):= \frac{\gamma (4q - (1+q)^2a_1a_2)}
    {a_1\alpha^2 q + a_2\beta^2 -a_1a_2\alpha\beta (1+q)} 
\end{align} 
and $q_0\in I$ is a maximizer of $\frac{f(q)}{1+q}$. 
The regions derived from 
\eqref{rewrite;Bai-Winkler} 
and \eqref{rewrite;Mizukami} 
are described in Figure 1. 
\begin{figure}[h]
\begin{center}
{\unitlength 0.1in%
\begin{picture}(24.9000,19.7000)(2.0000,-23.2000)%
\put(4.6000,-22.2000){\makebox(0,0)[rt]{O}}%
\put(4.9000,-6.3000){\makebox(0,0)[rt]{$t$}}%
\put(26.9000,-22.2000){\makebox(0,0)[rt]{$s$}}%
%
\special{pn 8}%
\special{pa 410 2180}%
\special{pa 2690 2180}%
\special{fp}%
\special{sh 1}%
\special{pa 2690 2180}%
\special{pa 2623 2160}%
\special{pa 2637 2180}%
\special{pa 2623 2200}%
\special{pa 2690 2180}%
\special{fp}%
\special{pn 8}%
\special{pa 410 2180}%
\special{pa 2690 2180}%
\special{fp}%
%
\special{pn 13}%
\special{pa 540 820}%
\special{pa 2030 2180}%
\special{fp}%
%
\special{pn 4}%
\special{pa 1200 1440}%
\special{pa 540 2100}%
\special{fp}%
\special{pa 1230 1470}%
\special{pa 540 2160}%
\special{fp}%
\special{pa 1260 1500}%
\special{pa 580 2180}%
\special{fp}%
\special{pa 1300 1520}%
\special{pa 640 2180}%
\special{fp}%
\special{pa 1330 1550}%
\special{pa 700 2180}%
\special{fp}%
\special{pa 1360 1580}%
\special{pa 760 2180}%
\special{fp}%
\special{pa 1390 1610}%
\special{pa 820 2180}%
\special{fp}%
\special{pa 1420 1640}%
\special{pa 880 2180}%
\special{fp}%
\special{pa 1450 1670}%
\special{pa 940 2180}%
\special{fp}%
\special{pa 1480 1700}%
\special{pa 1000 2180}%
\special{fp}%
\special{pa 1520 1720}%
\special{pa 1060 2180}%
\special{fp}%
\special{pa 1550 1750}%
\special{pa 1120 2180}%
\special{fp}%
\special{pa 1580 1780}%
\special{pa 1180 2180}%
\special{fp}%
\special{pa 1610 1810}%
\special{pa 1240 2180}%
\special{fp}%
\special{pa 1640 1840}%
\special{pa 1300 2180}%
\special{fp}%
\special{pa 1670 1870}%
\special{pa 1360 2180}%
\special{fp}%
\special{pa 1700 1900}%
\special{pa 1420 2180}%
\special{fp}%
\special{pa 1740 1920}%
\special{pa 1480 2180}%
\special{fp}%
\special{pa 1770 1950}%
\special{pa 1540 2180}%
\special{fp}%
\special{pa 1800 1980}%
\special{pa 1600 2180}%
\special{fp}%
\special{pa 1830 2010}%
\special{pa 1660 2180}%
\special{fp}%
\special{pa 1860 2040}%
\special{pa 1720 2180}%
\special{fp}%
\special{pa 1890 2070}%
\special{pa 1780 2180}%
\special{fp}%
\special{pa 1920 2100}%
\special{pa 1840 2180}%
\special{fp}%
\special{pa 1950 2130}%
\special{pa 1900 2180}%
\special{fp}%
\special{pa 1990 2150}%
\special{pa 1960 2180}%
\special{fp}%
\special{pa 1170 1410}%
\special{pa 540 2040}%
\special{fp}%
\special{pa 1140 1380}%
\special{pa 540 1980}%
\special{fp}%
\special{pa 1110 1350}%
\special{pa 540 1920}%
\special{fp}%
\special{pa 1080 1320}%
\special{pa 540 1860}%
\special{fp}%
\special{pa 1040 1300}%
\special{pa 540 1800}%
\special{fp}%
\special{pa 1010 1270}%
\special{pa 540 1740}%
\special{fp}%
\special{pa 980 1240}%
\special{pa 540 1680}%
\special{fp}%
\special{pa 950 1210}%
\special{pa 540 1620}%
\special{fp}%
\special{pa 920 1180}%
\special{pa 540 1560}%
\special{fp}%
\special{pa 890 1150}%
\special{pa 540 1500}%
\special{fp}%
\special{pa 860 1120}%
\special{pa 540 1440}%
\special{fp}%
\special{pa 830 1090}%
\special{pa 540 1380}%
\special{fp}%
\special{pa 790 1070}%
\special{pa 540 1320}%
\special{fp}%
\special{pa 760 1040}%
\special{pa 540 1260}%
\special{fp}%
\special{pa 730 1010}%
\special{pa 540 1200}%
\special{fp}%
\special{pa 700 980}%
\special{pa 540 1140}%
\special{fp}%
\special{pa 670 950}%
\special{pa 540 1080}%
\special{fp}%
\special{pa 640 920}%
\special{pa 540 1020}%
\special{fp}%
\special{pa 610 890}%
\special{pa 540 960}%
\special{fp}%
\special{pa 570 870}%
\special{pa 540 900}%
\special{fp}%
%
\special{pn 4}%
\special{pa 1680 1660}%
\special{pa 560 1660}%
\special{fp}%
\special{pa 1680 1720}%
\special{pa 560 1720}%
\special{fp}%
\special{pa 1680 1780}%
\special{pa 560 1780}%
\special{fp}%
\special{pa 1680 1840}%
\special{pa 560 1840}%
\special{fp}%
\special{pa 1680 1900}%
\special{pa 560 1900}%
\special{fp}%
\special{pa 1680 1960}%
\special{pa 560 1960}%
\special{fp}%
\special{pa 1680 2020}%
\special{pa 560 2020}%
\special{fp}%
\special{pa 1680 2080}%
\special{pa 560 2080}%
\special{fp}%
\special{pa 1680 2140}%
\special{pa 560 2140}%
\special{fp}%
\special{pa 1680 1600}%
\special{pa 560 1600}%
\special{fp}%
\special{pa 1680 1540}%
\special{pa 560 1540}%
\special{fp}%
\special{pa 1680 1480}%
\special{pa 560 1480}%
\special{fp}%
\special{pa 1680 1420}%
\special{pa 560 1420}%
\special{fp}%
\special{pa 1680 1360}%
\special{pa 560 1360}%
\special{fp}%
\special{pa 1680 1300}%
\special{pa 560 1300}%
\special{fp}%
\special{pa 1680 1240}%
\special{pa 560 1240}%
\special{fp}%
\special{pa 1680 1180}%
\special{pa 560 1180}%
\special{fp}%
%
\special{pn 13}%
\special{pa 540 1130}%
\special{pa 1680 1130}%
\special{fp}%
%
\special{pn 13}%
\special{pa 1680 1130}%
\special{pa 1680 2180}%
\special{fp}%
\put(20.1000,-22.6000){\makebox(0,0)[lt]{$f(1)$}}%
\put(2.0000,-8.0000){\makebox(0,0)[lt]{$f(1)$}}%
\put(15.7000,-22.7000){\makebox(0,0)[lt]{$\frac{f(q_0)}{1+q_0}$}}%
\put(2.3000,-10.7000){\makebox(0,0)[lt]{$\frac{f(q_0)}{1+q_0}$}}%
\put(21.9000,-19.3000){\makebox(0,0)[lt]{Bai--Winkler [{\bf 1}]}}%
\put(21.4000,-13.1000){\makebox(0,0)[lt]{Mizukami [{\bf 6}]}}%
%
\special{pn 8}%
\special{pa 540 2260}%
\special{pa 540 650}%
\special{fp}%
\special{sh 1}%
\special{pa 540 650}%
\special{pa 520 717}%
\special{pa 540 703}%
\special{pa 560 717}%
\special{pa 540 650}%
\special{fp}%
%
\special{pn 13}%
\special{pa 2080 1380}%
\special{pa 1540 1380}%
\special{fp}%
\special{sh 1}%
\special{pa 1540 1380}%
\special{pa 1607 1400}%
\special{pa 1593 1380}%
\special{pa 1607 1360}%
\special{pa 1540 1380}%
\special{fp}%
%
\special{pn 13}%
\special{pa 2140 2000}%
\special{pa 1760 2000}%
\special{fp}%
\special{sh 1}%
\special{pa 1760 2000}%
\special{pa 1827 2020}%
\special{pa 1813 2000}%
\special{pa 1827 1980}%
\special{pa 1760 2000}%
\special{fp}%
\end{picture}}
Figure 1.
\label{fig;previouswork}
\end{center}
\end{figure}

\noindent
These results \cite{Bai-Winkler_2016,Mizukami_DCDSB} 
were also concerned with asymptotic stability in \eqref{cp} 
in the case that $a_1 \ge 1 > a_2$. 
More related works can be found in 
\cite{Mizukami_AIMSmath,Mizukami_DCDSB,Mizukami-Yokota_01,N-T_SIAM,N-T_JDE,Zhang-Li_2015}; 
global existence and 
boundedness in \eqref{cp} 
with general sensitivity functions 
can be found in \cite{Mizukami_DCDSB,Zhang-Li_2015}; 
related works which treated 
the non-competition case are 
in \cite{Mizukami_AIMSmath,Mizukami-Yokota_01,
N-T_SIAM,N-T_JDE}.

In summary the conditions for asymptotic stability 
in \eqref{cp} are known; however, there is a gap 
between the conditions \eqref{condi;Bai-Winkler} 
and \eqref{condi;Mizukami}--\eqref{condi;Mizukami2} 
(for more details, see Figure 1). 
The purpose of this work is 
to improve the conditions assumed in 
\cite{Bai-Winkler_2016} and \cite{Mizukami_DCDSB} 
for asymptotic behavior 
in the case that $a_1,a_2\in (0,1)$. 
%
%
%
%
In order to attain this purpose 
we shall assume throughout this paper 
that there exists 
$M_1,M_2>0$ satisfying 
\begin{align}\label{condi;chi}
   &\chi_1(s) \le M_1, 
\quad 
   \chi_2(s) \le M_2
\quad 
  \mbox{for all}\ s>0, 
\\\label{condi;main}
  &  \left(\frac{\uast M_1^2}{4d_1d_3a_1\mu_1},
  \frac{\vast M_2^2}{4d_2d_3a_2\mu_2}
  \right) 
  \in \bigcup_{q\in I}
    \{(s,t)\in \mathbb{R}^2 \,|\, 
    s,t \ge 0,\ s + qt < f(q)\},
\end{align}
where the interval $I$ and the function $f$ 
are defined as \eqref{def;Iandf}. 
The region derived from the 
condition \eqref{condi;main} is described 
in Figure 2, 
and include the regions derived from 
\eqref{rewrite;Bai-Winkler} 
and \eqref{rewrite;Mizukami}.  
\begin{figure}[h]
\begin{center}
\input{test2}\\[4mm]
Figure 2.
\label{fig;thiswork}
\end{center}
\end{figure}

%
%
%
%
Now the main results 
read as follows. 
We suppose that the initial data 
$u_0, v_0, w_0$ satisfy
\begin{align}\label{ini} 
0\leq u_0\in C(\ol{\Omega})\setminus \{0\}, \quad
0\leq v_0\in C(\ol{\Omega})\setminus \{0\}, \quad 
0\leq w_0\in W^{1,q}(\Omega) \ (\exists \, q>n).
\end{align} 
The first theorem is concerned 
with asymptotic behavior 
in \eqref{cp} 
in the case $a_1,a_2\in (0,1)$. 
\begin{thm}\label{mainth} 
Let $d_1,d_2,d_3,\mu_1,\mu_2,
\alpha,\beta,\gamma>0$, 
$a_1,a_2\in (0,1)$ be constants, 
let $\chi_1,\chi_2$ be nonnegative functions 
and let $\Omega \subset \Rn$ $(n\ge 2)$ be a bounded domain 
with smooth boundary. 
Assume that there exists a unique global 
classical solution $(u,v,w)$ of \eqref{cp} 
satisfying 
\begin{align*}
 \|u\|_{C^{2+\theta,1+\frac{\theta}{2}}
 (\ol{\Omega}\times[t,t+1])} 
 + \|v\|_{C^{2+\theta,1+\frac{\theta}{2}}
 (\ol{\Omega}\times[t,t+1])} 
 + \|w\|_{C^{2+\theta,1+\frac{\theta}{2}}
 (\ol{\Omega}\times[t,t+1])}
 \le M
 \quad \mbox{for all}\ t\ge 1
\end{align*}
with some $\theta\in (0,1)$ and $M>0$. 
Then 
under the conditions 
\eqref{condi;chi}{\rm --}\eqref{ini}, 
the solution $(u,v,w)$ satisfies that there exist $C>0$ 
and $\ell>0$ such that 
\begin{align*}
 \|u\cd-u^\ast\|_{L^{\infty}(\Omega)} 
 +\|v\cd-v^\ast\|_{L^{\infty}(\Omega)} 
 +\|w\cd-w^\ast\|_{L^{\infty}(\Omega)} 
 \leq Ce^{-\ell t} 
 \quad 
  \mbox{for all}\ t>0, 
\end{align*} 
where 
\[
  u^\ast:=\frac{1-a_1}{1-a_1a_2},
\quad 
  v^\ast:=\frac{1-a_2}{1-a_1a_2}, 
\quad 
  w^\ast := \frac{\alpha \uast +\beta \vast}{\gamma}.
\]
\end{thm}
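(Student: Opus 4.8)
The plan is to construct a Lyapunov functional of relative-entropy type and to show it decays along trajectories, with its dissipation controlling the full $L^2$-distance to the homogeneous steady state. Concretely, I would set
\[
E(t) := \into\Big(u - \uast - \uast\ln\tfrac{u}{\uast}\Big) + \kappa\into\Big(v - \vast - \vast\ln\tfrac{v}{\vast}\Big) + \lambda\into (w-\wast)^2,
\]
where $\kappa,\lambda>0$ are weights to be fixed later in terms of a parameter $q\in I$. Both entropy summands are nonnegative and vanish only at the steady state, so $E\ge 0$ with equality iff $(u,v,w)\equiv(\uast,\vast,\wast)$. I would then differentiate $E$ along \eqref{cp}, integrate by parts using the no-flux boundary conditions, and substitute the steady-state identities $1-\uast-a_1\vast=0$, $1-a_2\uast-\vast=0$ and $\alpha\uast+\beta\vast-\gamma\wast=0$.

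This substitution splits $\frac{d}{dt}E$ into two pieces. The \emph{gradient} piece is a pointwise quadratic form in $\frac{\nabla u}{u},\frac{\nabla v}{v},\nabla w$ whose only indefinite contributions are the two chemotactic cross terms $\uast\chi_1(w)\frac{\nabla u\cdot\nabla w}{u}$ and $\kappa\vast\chi_2(w)\frac{\nabla v\cdot\nabla w}{v}$; using $\chi_i\le M_i$ and the diffusive dissipation $-d_1\uast\frac{|\nabla u|^2}{u^2}$, $-\kappa d_2\vast\frac{|\nabla v|^2}{v^2}$ together with $-2\lambda d_3|\nabla w|^2$, this form is nonpositive provided $\lambda$ is large enough relative to $M_1^2,M_2^2$. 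The \emph{reaction} piece is a quadratic form in $\xi=u-\uast$, $\eta=v-\vast$, $\zeta=w-\wast$, with diagonal part $-\mu_1\xi^2-\kappa\mu_2\eta^2-2\lambda\gamma\zeta^2$ and off-diagonal couplings coming from the competition terms and from the production terms $2\lambda\alpha\xi\zeta+2\lambda\beta\eta\zeta$.

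The heart of the argument, and the source of the improvement, is to choose $\kappa,\lambda$ as functions of the free parameter $q$ so that negative definiteness of the reaction form is equivalent, after optimizing over $q\in I$, precisely to the membership condition \eqref{condi;main}. I would tune the weights so that the Sylvester (principal-minor) conditions for the reaction form collapse to the single inequality $s+qt<f(q)$ with $s=\frac{\uast M_1^2}{4d_1d_3a_1\mu_1}$, $t=\frac{\vast M_2^2}{4d_2d_3a_2\mu_2}$, the union over $q\in I$ reflecting the freedom in the choice of $q$. Once this is arranged one obtains a constant $\ep>0$ with
\[
\tfrac{d}{dt}E(t)\le -\ep\into\big((u-\uast)^2+(v-\vast)^2+(w-\wast)^2\big)\qquad\text{for all }t>0.
\]
I expect the delicate point to be reconciling the gradient requirement (which forces $\lambda$ large so the chemotactic cross terms are absorbed) with the reaction requirement (which prefers $\lambda$ moderate, since $2\lambda\alpha\xi\zeta+2\lambda\beta\eta\zeta$ must not overwhelm $-2\lambda\gamma\zeta^2$), because both constraints have to hold for one and the same admissible $q$.

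Finally I would convert this differential inequality into the stated exponential $L^\infty$ decay. Integrating in time gives $\int_1^\infty\into((u-\uast)^2+(v-\vast)^2+(w-\wast)^2)<\infty$; combined with the assumed uniform $C^{2+\theta,1+\theta/2}$ bound, which renders the spatial integrals uniformly continuous in $t$, a Barbalat-type argument together with the Gagliardo--Nirenberg inequality yields $\|u\cd-\uast\|_{L^\infty(\Omega)}+\|v\cd-\vast\|_{L^\infty(\Omega)}+\|w\cd-\wast\|_{L^\infty(\Omega)}\to0$. In particular $u,v$ are eventually bounded below by positive constants, so on some $[T,\infty)$ the entropy terms become comparable to the squared $L^2$-distance; this upgrades the dissipation inequality to $\frac{d}{dt}E\le -cE$, whence $E$ and the $L^2$-distance decay exponentially. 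Interpolating once more through $\|f\|_{L^\infty(\Omega)}\le C\|f\|_{W^{1,\infty}(\Omega)}^{a}\|f\|_{L^2(\Omega)}^{1-a}$ against the uniform gradient bound transfers this to exponential decay in $L^\infty$ at a possibly smaller rate $\ell>0$, which is exactly the asserted estimate.
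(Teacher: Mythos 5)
Your proposal is correct and follows essentially the same route as the paper: the same weighted relative-entropy functional $a_2\mu_2 A+qa_1\mu_1 B+\delta C$ (your $\kappa,\lambda$ play the roles of $q$ and $\delta$), the same split of $\frac{d}{dt}E$ into a gradient part absorbed by Young's inequality using $\chi_i\le M_i$ and a reaction part handled by the Sylvester criterion, with condition \eqref{condi;main} being exactly what reconciles the lower bound on $\lambda$ (chemotaxis absorption) with the upper bound on $\lambda$ (negative definiteness of the reaction form). The concluding Barbalat/Gagliardo--Nirenberg upgrade to exponential $L^\infty$ decay that you sketch is precisely the argument the paper delegates to \cite[Theorems 3.3, 3.6 and 3.7]{Bai-Winkler_2016}.
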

%
\begin{remark}
The condition \eqref{condi;main} improves 
the conditions assumed in 
\cite{Bai-Winkler_2016} and \cite{Mizukami_DCDSB} 
(for more details, see Section \ref{discussion}). 
Moreover, from the careful calculations we have that 
\begin{align*}
  &\bigcup_{q\in I}
    \{(s,t)\in \mathbb{R}^2 \,|\, 
    s,t \ge 0,\ s + qt < f(q)\} 
    \\
    &= \left\{(s,t)\in \mathbb{R}^2 \,\Big|\, 
    s\ge 0,\ 0\le t < \frac{a_2\gamma}{\alpha(a_2\beta-\alpha)_+},\
    h_1(s,t)>0,\ h_2^+(s,t)>0,\ h_2^-(s,t)<0
    \right\},
\end{align*}
where 
\begin{align*}
h_1(s,t):= \,&a_1^2\alpha^2(\alpha-a_2\beta)^2s^2 
           + a_2^2\beta^2 (\beta - a_1\alpha)^2 t^2 
           -2a_1a_2\alpha\beta(\alpha-a_2\beta)(\beta-a_1\alpha)st 
           \\ &
           - 4\gamma (2a_1\alpha^2 
           - 2a_1a_2\alpha\beta 
           + a_1a_2^2\beta^2
           - a_1^2a_2\alpha^2)s
           \\ &
           - 4\gamma (2a_2\beta^2\gamma 
           - 2a_1a_2\alpha\beta
           + a_1^2a_2\alpha^2
           - a_1a_2^2\beta^2)t
           + 16\gamma^2 (1-a_1a_2)
\end{align*}
and
\begin{align*}
h_2^{\pm}(s,t):=\, &a_1\alpha(\alpha-a_2\beta)s + 
          \left(
          \frac{\alpha(4-2a_1a_2 \pm 4\sqrt{1-a_1a_2})
          (\alpha-a_2\beta)
          }{a_2}
          +a_2\beta(\beta-a_1\alpha)
          \right) t
          \\ &
          \pm 4\gamma \sqrt{1-a_1a_2}.
\end{align*}
\end{remark}
\begin{remark}
We note from $q\in I$ that 
\begin{align*}
 a_1\alpha^2 q + a_2\beta^2 -a_1a_2\alpha\beta (1+q)>0
\end{align*}
holds. Indeed, 
the discriminant of 
$a_1q \alpha^2  -a_1a_2\beta (1+q)\alpha 
+ a_2\beta^2 $ 
is negative: 
\begin{align*}
 D_\alpha := 
 - a_1a_2\beta^2 (4q - (1+q)^2a_1a_2) < 0. 
\end{align*}
\end{remark}

Then a combination of results concerned with 
global existence and boundedness in \eqref{cp} 
(\cite{Bai-Winkler_2016,Lin-Mu-Wang,Mizukami_DCDSB}) 
and Theorem \ref{mainth} 
implies the following results. 

\begin{thm}\label{mainth2}
Let $d_1,d_2,d_3> 0$, $\mu_1,\mu_2 > 0$, 
$a_1,a_2\in (0,1)$, $\alpha,\beta,\gamma>0$ and 
let $\Omega \subset \Rn$ $(n\ge 2)$ be 
a bounded domain with smooth boundary. 
Assume that $\chi_1,\chi_2>0$ are constants and 
one of the following two properties is satisfied\/{\rm :} 
\begin{itemize}
\item[{\rm (i)}] 
$n=2$, 
\item[{\rm (ii)}]
$\Omega$ is a convex domain and 
$\mu_1>\frac{n\chi_1}{4}$, $\mu_2>\frac{n\chi_2}{4}$, 
$\mu_1+\frac{a_1\mu_1}{2}+\frac{a_2\mu_2\chi_1}{2\chi_2}>\frac{n\chi_1}{2}$ and 
$\mu_2+\frac{a_2\mu_2}{2}+\frac{a_1\mu_1\chi_2}{\chi_1}> \frac{n\chi_2}{2}$ 
hold.
\end{itemize}
Then 
under the conditions 
\eqref{condi;chi}{\rm --}\eqref{ini}, 
the same conclusion as in Theorem \ref{mainth} 
holds. 
\end{thm}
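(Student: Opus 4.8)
The plan is to obtain Theorem \ref{mainth2} as a corollary of Theorem \ref{mainth}, by checking that under each of the hypotheses (i) and (ii) the global classical solution of \eqref{cp} exists and satisfies the uniform H\"older bound that Theorem \ref{mainth} takes as an assumption. Since $\chi_1,\chi_2$ are positive constants, condition \eqref{condi;chi} holds trivially with $M_1=\chi_1$ and $M_2=\chi_2$, so the hypothesis \eqref{condi;main} coincides with the corresponding hypothesis of Theorem \ref{mainth}. Thus the only genuine task is to verify the regularity assumption of Theorem \ref{mainth}; once that is in place the desired exponential stabilization follows immediately.

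First I would invoke the existing global existence and boundedness theory. In case (i), when $n=2$, the result of Bai--Winkler \cite{Bai-Winkler_2016} provides a unique global classical solution $(u,v,w)$ that is bounded, i.e.\ $\sup_{t>0}(\|u\cd\|_{L^\infty(\Omega)}+\|v\cd\|_{L^\infty(\Omega)}+\|w\cd\|_{L^\infty(\Omega)})<\infty$. In case (ii), when $\Omega$ is convex and the inequalities $\mu_1>\tfrac{n\chi_1}{4}$, $\mu_2>\tfrac{n\chi_2}{4}$ together with the two remaining smallness-type conditions hold, the same conclusion follows from Lin--Mu--Wang \cite{Lin-Mu-Wang}; indeed, those inequalities are precisely the conditions under which \cite{Lin-Mu-Wang} established global existence and uniform boundedness in the $n$-dimensional setting. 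In either case one obtains a time-independent bound $\|u\cd\|_{L^\infty(\Omega)}+\|v\cd\|_{L^\infty(\Omega)}+\|w\cd\|_{L^\infty(\Omega)}\le K$ for all $t>0$.

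Second, I would upgrade this $L^\infty$ boundedness to the uniform parabolic H\"older bound demanded by Theorem \ref{mainth}. Because \eqref{cp} is autonomous, on each unit time window $\ol{\Omega}\times[t,t+1]$ one may apply standard parabolic regularity with constants depending only on $K$, on the constant sensitivities $\chi_i$, on the coefficients, and on $\Omega$, but \emph{not} on $t$. Concretely, treating the third equation with the bounded source $\alpha u+\beta v-\gamma w$, parabolic $L^p$ and Schauder estimates give a uniform-in-$t$ bound for $w$ in $C^{2+\theta,1+\frac{\theta}{2}}(\ol{\Omega}\times[t+\tfrac12,t+1])$; feeding the resulting gradient bound for $\nabla w$ into the first two equations, which are then uniformly parabolic with bounded coefficients, and iterating yields the corresponding uniform bounds for $u$ and $v$. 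A harmless shift of the time window then produces the stated estimate on $\ol{\Omega}\times[t,t+1]$ for all $t\ge 1$, with a single constant $M$ and exponent $\theta\in(0,1)$.

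Finally, with the regularity hypothesis verified and \eqref{condi;chi}--\eqref{ini} in force (recalling $M_1=\chi_1$, $M_2=\chi_2$), Theorem \ref{mainth} applies verbatim and yields the exponential convergence toward $(\uast,\vast,\wast)$, which is the assertion of Theorem \ref{mainth2}. I expect the one genuinely technical point to be the uniformity in $t$ of the H\"older constant: one must argue that the bootstrap constants can be chosen independently of the time window, which rests on the time-translation invariance of \eqref{cp} together with the global $L^\infty$ bound, rather than on any decay information---the decay is an output of Theorem \ref{mainth}, not an input at this stage.
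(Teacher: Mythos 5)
Your proposal is correct and follows essentially the same route as the paper: the paper's own proof is a one-line argument combining the known global existence and boundedness results (Bai--Winkler \cite{Bai-Winkler_2016} for $n=2$, Lin--Mu--Wang \cite{Lin-Mu-Wang} for case (ii)) with the standard parabolic regularity argument and Theorem \ref{mainth}. Your write-up simply makes explicit the bootstrap from the uniform $L^\infty$ bound to the time-uniform $C^{2+\theta,1+\frac{\theta}{2}}$ estimate, which the paper leaves implicit under the phrase ``standard parabolic regularity argument.''
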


\begin{thm}\label{mainth3}
Let $d_1,d_2,d_3> 0$, $\mu_1,\mu_2 > 0$, 
$a_1,a_2\in (0,1)$, 
$\alpha,\beta,\gamma>0$ and 
let $\Omega \subset \Rn$ $(n\ge 2)$ be a bounded domain 
with smooth boundary. 
Assume that the functions $\chi_1,\chi_2$ 
satisfy the following conditions\/{\rm :} 
\begin{align*} 
  &\chi_i\in C^{1+\eta}([0,\infty))\cap 
  L^{1}(0,\infty), 
\\
  &2d_id_3\chi_i'(w)+\left( (d_3-d_i)p+
    \sqrt{(d_3-d_i)^2p^2+4d_id_3p}\right)
    [\chi_i(w)]^2\leq 0,
\\
  &w\chi_i(w) \le C_\chi \quad 
  \mbox{for all}\ w\ge 0\ \mbox{and}\ i=1,2,3
\end{align*}
with some $\eta\in (0,1)$, $p>n$ and $C_\chi>0$. 
Then under the conditions 
\eqref{condi;chi}{\rm --}\eqref{ini}, 
the same conclusion as in 
Theorem \ref{mainth} holds. 
\end{thm}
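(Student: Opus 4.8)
The plan is to deduce Theorem \ref{mainth3} from Theorem \ref{mainth} by checking that the structural conditions imposed here on $\chi_1,\chi_2$ are precisely those guaranteeing a unique global classical solution of \eqref{cp} together with uniform-in-time boundedness, and then upgrading that boundedness to the uniform H\"older bound demanded by Theorem \ref{mainth}.

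First I would invoke the global existence and boundedness theory for \eqref{cp} with signal-dependent sensitivity established in \cite{Mizukami_DCDSB} (see also \cite{Zhang-Li_2015}). The three hypotheses placed on each $\chi_i$ here --- the regularity $\chi_i\in C^{1+\eta}([0,\infty))\cap L^1(0,\infty)$, the differential inequality $2d_id_3\chi_i'(w)+((d_3-d_i)p+\sqrt{(d_3-d_i)^2p^2+4d_id_3p})[\chi_i(w)]^2\le 0$, and the pointwise bound $w\chi_i(w)\le C_\chi$ --- are exactly the conditions under which those works produce, for initial data satisfying \eqref{ini}, a unique global classical solution $(u,v,w)$ of \eqref{cp} obeying a uniform-in-time $L^\infty$ estimate for $u$, $v$ and $w$. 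I would verify that our assumptions match theirs verbatim, noting in particular that the differential inequality is what furnishes the energy-type estimate controlling the chemotactic drift, while $L^1$-integrability together with $w\chi_i(w)\le C_\chi$ prevents the sensitivity from growing too fast.

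Next, from the uniform $L^\infty$ bounds I would bootstrap to uniform parabolic regularity. Viewing each equation of \eqref{cp} as a scalar parabolic equation whose right-hand side is already controlled in $L^\infty$, smoothing estimates for the Neumann heat semigroup first give uniform-in-time bounds in $W^{1,q}$, after which parabolic $L^p$ and Schauder theory applied on the unit time slabs $\ol{\Omega}\times[t,t+1]$ yield uniform bounds in $C^{2+\theta,1+\frac{\theta}{2}}(\ol{\Omega}\times[t,t+1])$ with some $\theta\in(0,1)$ and a constant $M>0$ independent of $t\ge 1$. This is exactly the hypothesis of Theorem \ref{mainth}.

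With this uniform regularity in hand, I would simply apply Theorem \ref{mainth}: since $a_1,a_2\in(0,1)$ and the conditions \eqref{condi;chi}--\eqref{ini} hold by assumption, that theorem delivers the exponential convergence of $(u,v,w)$ to $(u^\ast,v^\ast,w^\ast)$ claimed here. The main obstacle is the first step --- confirming that the rather technical differential inequality on $\chi_i$ genuinely falls within the scope of the cited global-existence results and, crucially, that the bounds obtained are uniform in time rather than merely finite on each bounded interval. Once such uniform boundedness is secured, the H\"older bootstrap and the concluding appeal to Theorem \ref{mainth} are routine.
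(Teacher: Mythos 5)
Your proposal is correct and follows essentially the same route as the paper, whose proof is a one-line combination of the known global existence and boundedness results for signal-dependent sensitivity (\cite{Mizukami_DCDSB}), a standard parabolic regularity bootstrap to obtain the uniform $C^{2+\theta,1+\frac{\theta}{2}}$ bounds on time slabs, and an application of Theorem \ref{mainth}. Your write-up merely fills in the details the paper leaves implicit, including the key point that the boundedness must be uniform in time.
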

\begin{remark}
We note from the global-in-time lower estimate for 
$w$ (\cite{Fujie_2015}) that 
the same arguments as in the proof of 
Theorem \ref{mainth} 
can also be applied to 
the case that $\chi_i(w)=\frac{K_i}{w}$. 
\end{remark}

The strategy of the proof of Theorem \ref{mainth} 
is to modify the methods in 
\cite{Bai-Winkler_2016,Mizukami_DCDSB}. 
One of the keys for the proof of 
Theorem \ref{mainth} is to 
derive the following energy estimate: 
\begin{align}\label{strategy;stab}
 \frac{d}{dt}E(t) \le 
 -\ep \into 
 \left[
   (u\cd-\uast)^2 + (v\cd-\vast)^2 + (w\cd-\wast)^2
 \right]
 \quad \mbox{for all}\ t>0
\end{align}
with some positive function $E$ and 
some constant $\ep>0$. 
Thanks to \eqref{strategy;stab}, 
we can obtain Theorem \ref{mainth}. 
The key for the improvement is to 
provide the best estimate for the terms 
\begin{align*}
  \into \frac{\chi_1(w)}{u}\nabla u\cdot \nabla w
\quad \mbox{and}\quad 
  \into \frac{\chi_2(w)}{v}\nabla v\cdot \nabla w. 
\end{align*}

%
%

This paper is organized as follows. 
In Section 2 
we prove 
asymptotic stability in the case that $a_1,a_2\in (0,1)$
(Theorems \ref{mainth}, \ref{mainth2} and \ref{mainth3}). 
Section 3 is devoted to discussions; 
we confirm that the condition \eqref{condi;main} 
improves the conditions assumed 
in \cite{Bai-Winkler_2016} and \cite{Mizukami_DCDSB}. 

%
%

\section{Proof of Theorem \ref{mainth}}

In this section we prove stabilization in \eqref{cp} 
in the case that $a_1,a_2\in (0,1)$. 
Here we assume that 
there exists a unique global 
classical solution $(u,v,w)$ of \eqref{cp} 
satisfying 
\begin{align*}
 \|u\|_{C^{2+\theta,1+\frac{\theta}{2}}
 (\ol{\Omega}\times[t,t+1])} 
 + \|v\|_{C^{2+\theta,1+\frac{\theta}{2}}
 (\ol{\Omega}\times[t,t+1])} 
 + \|w\|_{C^{2+\theta,1+\frac{\theta}{2}}
 (\ol{\Omega}\times[t,t+1])}
 \le M
 \quad \mbox{for all}\ t\ge 1
\end{align*}
with some $\theta \in (0,1)$ and $M>0$. 
%
%
%
%
We first provide the following lemma which 
will be used later. 

\begin{lem}\label{lem;quadra}
Let $a,b,c,d,e,f\in \mathbb{R}$. 
Suppose that 
\begin{align}\label{assumption;lemma1}
  a>0,
\quad 
  ad-\frac{b^2}{4}>0, 
\quad
  adf + \frac{bce}{4} - \frac{cd^2}{4} 
  - \frac{b^2 f}{4} -\frac{ae^2}{4}>0.
\end{align}
Then there exists $\ep>0$ such that 
\begin{align*}
ay_1^2 +by_1y_2 + cy_1y_3 + dy_2^2 + ey_2y_3 + f y_3^2 
\ge \ep \left(y_1^2+ y_2^2+y_3^2\right)
\end{align*}
holds for all $y_1,y_2,y_3\in \mathbb{R}$. 
\end{lem}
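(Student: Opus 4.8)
The plan is to read the left-hand side as $y^{\top}Ay$ for $y=(y_1,y_2,y_3)^{\top}$ and the symmetric matrix
\[
A=\begin{pmatrix} a & \tfrac{b}{2} & \tfrac{c}{2} \\ \tfrac{b}{2} & d & \tfrac{e}{2} \\ \tfrac{c}{2} & \tfrac{e}{2} & f \end{pmatrix},
\]
and then to observe that the three hypotheses in \eqref{assumption;lemma1} are precisely the statement that the three leading principal minors of $A$ are positive. The first minor is $a$, the second is $ad-\tfrac{b^2}{4}$, and a direct (if slightly tedious) expansion of the $3\times 3$ determinant gives $\det A = adf+\tfrac{bce}{4}-\tfrac{c^2 d}{4}-\tfrac{b^2 f}{4}-\tfrac{ae^2}{4}$, which coincides with the third expression in \eqref{assumption;lemma1}. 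Thus the hypotheses say exactly that all leading principal minors of $A$ are positive.

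Given this, I would invoke Sylvester's criterion, by which positivity of all leading principal minors is equivalent to $A$ being symmetric positive definite. By the spectral theorem, $A$ then has real eigenvalues $\lambda_1\le\lambda_2\le\lambda_3$, all strictly positive, and the variational characterization of the smallest eigenvalue yields $y^{\top}Ay\ge\lambda_1\,(y_1^2+y_2^2+y_3^2)$ for every $y\in\mathbb{R}^3$. Choosing $\ep:=\lambda_1>0$ gives the claimed inequality.

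An alternative route, which matches the way the three hypotheses are bundled, is to complete the square directly and avoid citing Sylvester: since $a>0$, write $ay_1^2+(by_2+cy_3)y_1 = a\bigl(y_1+\tfrac{by_2+cy_3}{2a}\bigr)^2-\tfrac{(by_2+cy_3)^2}{4a}$, reducing the form to a positive multiple of a square plus a residual quadratic form in $(y_2,y_3)$ whose $2\times 2$ matrix has determinant equal to $\tfrac{1}{a}\det A$. The second and third hypotheses make this residual form positive definite, so the whole expression is strictly positive for $y\neq 0$; its minimum on the compact unit sphere is some $\ep>0$, and homogeneity of degree two upgrades this to $y^{\top}Ay\ge\ep\,(y_1^2+y_2^2+y_3^2)$ for all $y$.

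There is essentially no analytic obstacle here, since this is a standard linear-algebra fact; the only point demanding care is the bookkeeping in expanding $\det A$ and matching it term-by-term to the third inequality in \eqref{assumption;lemma1}, after which the conclusion is immediate.
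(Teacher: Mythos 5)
Your proof is correct, and it rests on the same pillar as the paper's: the three hypotheses in \eqref{assumption;lemma1} are exactly the three leading principal minors of the symmetric matrix $A$ of the quadratic form, and Sylvester's criterion converts their positivity into positive definiteness. The genuine difference lies in how the uniform constant $\ep$ is produced. The paper never invokes the spectral theorem or a compactness argument: it applies Sylvester's criterion not to $A$ itself but to the shifted matrix $A_\ep := A - \ep I$, observing that its leading principal minors $g_1(\ep)$, $g_2(\ep)$, $g_3(\ep)$ are continuous in $\ep$ and positive at $\ep = 0$, hence still positive for some $\ep_1 > 0$; the resulting inequality ${}^t X A_{\ep_1} X \ge 0$ is literally the claim with $\ep = \ep_1$. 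Your eigenvalue route ($\ep = \lambda_1$, the smallest eigenvalue) yields the sharp constant, and your sphere-compactness route is equally valid; the paper's continuity-of-minors device is slightly more elementary in that it uses nothing beyond Sylvester's criterion and continuity of polynomials in $\ep$. One bookkeeping item deserves mention: your expansion $\det A = adf + \frac{bce}{4} - \frac{c^2 d}{4} - \frac{b^2 f}{4} - \frac{ae^2}{4}$ is the correct one, so it does not literally ``coincide'' with the third expression in \eqref{assumption;lemma1}, which reads $-\frac{cd^2}{4}$; that term is a typo in the paper (repeated verbatim in its proof), as one can confirm from the application of the lemma in the proof of Lemma \ref{case1ena}, where the third quantity is evaluated using the $-\frac{c^2 d}{4}$ term. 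Your silent correction is the right reading of the hypothesis.
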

\begin{proof}
In order to prove this lemma we shall see that 
there is $\ep>0$ such that 
\begin{align}\label{parpose;lemma1}
{}^tXA_\ep X = 
(a-\ep)y_1^2 +by_1y_2 + cy_1y_3 
+ (d-\ep)y_2^2 + ey_2y_3 + (f-\ep) y_3^2 
\ge 0  
\end{align}
holds for all $y_1,y_2,y_3\in \mathbb {R}$, 
where 
\begin{align*}
 X:= 
   \begin{pmatrix}
   y_1 \\
   y_2 \\
   \ y_3\ 
   \end{pmatrix}, 
\quad
 A_\ep := 
 \begin{pmatrix}
   \ a-\ep \ & \frac b2 & \frac c2 \\
   \frac b2  & \ d-\ep\ & \frac e2 \\
   \frac c2  & \frac e2 & \ f-\ep \
 \end{pmatrix}.
\end{align*}
To confirm that 
there is $\ep>0$ such that 
\eqref{parpose;lemma1} holds for all 
$y_1,y_2,y_3\in \mathbb{R}$ we 
put 
\begin{align*}
  g_1(\ep) := a-\ep, 
\quad
  g_2(\ep) := 
  \begin{vmatrix}
   \ a-\ep\ & \frac b2 \\
   \frac b2 & \ d-\ep\
  \end{vmatrix},
\quad
  g_3(\ep) := |A_\ep|
\end{align*}
and shall show the existence of $\ep_1>0$ satisfying 
$g_i(\ep_1)>0$ for $i=1,2,3$. 
Now thanks to \eqref{assumption;lemma1}, 
we can see that 
\begin{align*}
  g_1(0) = a>0, 
\quad  
  g_2(0) =  
   \begin{vmatrix}
     \ a \    & \frac b2 \\
     \frac b2 & \ d\ 
   \end{vmatrix}
  = ad -\frac{b^2}{4}>0
\end{align*}
and
\begin{align*}
 g_3(0) =  
 \left|
\begin{array}{ccc}
   \ a \    & \frac b2 & \frac c2 \\
   \frac b2 & \ d \    & \frac e2 \\
   \frac c2 & \frac e2 & \ f \
\end{array}
  \right|
  = adf + \frac{bce}{4} - \frac{cd^2}{4} 
  - \frac{b^2 f}{4} -\frac{ae^2}{4}>0.
\end{align*}
Thus a combination of the above inequalities and 
the continuity argument yields that 
there is $\ep_1>0$ such that 
$g_i (\ep_1)>0$ hold for $i=1,2,3$. 
Therefore aided by the Sylvester criterion, 
we have \eqref{parpose;lemma1} 
for all $y_1,y_2,y_3\in \mathbb{R}$, 
which means the end of the proof. 
\end{proof}
%
%
%
%
Then we will prove the following energy estimate 
which leads to asymptotic behavior of solutions 
to \eqref{cp}. 
The proof is mainly based on 
the methods in \cite{Bai-Winkler_2016,Mizukami_DCDSB}. 

\begin{lem}\label{case1ena}
 Let $a_1,a_2\in(0,1)$ and 
 let $(u,v,w)$ be a solution to \eqref{cp}. 
 Then under the conditions 
 \eqref{condi;chi}{\rm --}\eqref{ini}, 
 there exist 
 a nonnegative function 
 $E:(0,\infty)\to \mathbb{R}$ and a constant $\ep>0$ 
 such that 
\begin{align}\label{case1imineq}
  \frac{d}{dt}E(t)\leq 
  -\ep 
  \left(
  \into (u-\uast)^2 
  + \into (v-\vast)^2 
  + \into (w-\wast)^2
  + \into |\nabla w|^2
  \right)
\end{align}
holds for all $t>0$. 
\end{lem}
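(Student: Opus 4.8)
The plan is to construct an entropy-type Lyapunov functional and to reduce \eqref{case1imineq} to two positivity requirements that \eqref{condi;main} is tailored to meet. I would set
\begin{align*}
E(t) := \into\left(u-\uast-\uast\ln\frac{u}{\uast}\right) + \kappa_1\into\left(v-\vast-\vast\ln\frac{v}{\vast}\right) + \frac{\kappa_2}{2}\into(w-\wast)^2,
\end{align*}
with weights $\kappa_1,\kappa_2>0$ to be fixed at the end. Each relative-entropy integrand is nonnegative and vanishes at the equilibrium, so $E\ge0$ automatically, the positivity of $u,v$ for $t>0$ following from the strong maximum principle together with the assumed regularity. Differentiating along \eqref{cp}, integrating by parts with the no-flux conditions, and rewriting the kinetics via $1-u-a_1v=-(u-\uast)-a_1(v-\vast)$, $1-a_2u-v=-a_2(u-\uast)-(v-\vast)$ and $\alpha u+\beta v-\gamma w=\alpha(u-\uast)+\beta(v-\vast)-\gamma(w-\wast)$ splits $\frac{d}{dt}E$ into the diffusion dissipation $-d_1\uast\into\frac{|\nabla u|^2}{u^2}-\kappa_1 d_2\vast\into\frac{|\nabla v|^2}{v^2}-\kappa_2 d_3\into|\nabla w|^2$, the chemotactic cross terms $\uast\into\frac{\chi_1(w)}{u}\nabla u\cdot\nabla w$ and $\kappa_1\vast\into\frac{\chi_2(w)}{v}\nabla v\cdot\nabla w$, and a zeroth-order quadratic form in $y_1=u-\uast$, $y_2=v-\vast$, $y_3=w-\wast$.

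The decisive step is the sharp handling of the cross terms. I would apply Young's inequality tuned so that the $\frac{|\nabla u|^2}{u^2}$-coefficient is exactly the available $d_1\uast$, and then use $\chi_1\le M_1$:
\begin{align*}
\uast\frac{\chi_1(w)}{u}\nabla u\cdot\nabla w \le d_1\uast\frac{|\nabla u|^2}{u^2}+\frac{\uast M_1^2}{4d_1}|\nabla w|^2,
\end{align*}
and likewise for the second term with $d_2,M_2$. Because $\nabla u/u$ and $\nabla v/v$ never interact, this tuning is optimal: it absorbs the full $\frac{|\nabla u|^2}{u^2}$ and $\frac{|\nabla v|^2}{v^2}$ dissipation and leaves the net gradient contribution $-c_w\into|\nabla w|^2$ with $c_w:=\kappa_2 d_3-\frac{\uast M_1^2}{4d_1}-\frac{\kappa_1\vast M_2^2}{4d_2}$. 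To the zeroth-order form I would apply Lemma \ref{lem;quadra} to its negation, whose coefficients are $a=\mu_1$, $b=\mu_1a_1+\kappa_1\mu_2a_2$, $d=\kappa_1\mu_2$, $c=-\kappa_2\alpha$, $e=-\kappa_2\beta$, $f=\kappa_2\gamma$, obtaining for that contribution the bound $\le-\ep_0\into(y_1^2+y_2^2+y_3^2)$ with some $\ep_0>0$. Taking $\ep:=\min\{c_w,\ep_0\}$ then yields \eqref{case1imineq}.

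It remains to choose $\kappa_1,\kappa_2$ so that $c_w>0$ and the three hypotheses of Lemma \ref{lem;quadra} hold simultaneously, and here \eqref{condi;main} enters. Setting $\kappa_1=\frac{\mu_1a_1q}{\mu_2a_2}$ converts the hypothesis $ad-\frac{b^2}{4}>0$ into exactly $4q-(1+q)^2a_1a_2>0$, i.e. $q\in I$. A direct computation, using the elementary fact recorded in a Remark above that $a_1\alpha^2q+a_2\beta^2-a_1a_2\alpha\beta(1+q)>0$ for $q\in I$, collapses the determinant hypothesis to $\kappa_2<\mu_1a_1 f(q)$, while $c_w>0$ reads $\kappa_2>\mu_1a_1(s+qt)$ with $s=\frac{\uast M_1^2}{4d_1d_3a_1\mu_1}$ and $t=\frac{\vast M_2^2}{4d_2d_3a_2\mu_2}$. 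Such a $\kappa_2$ exists precisely when $s+qt<f(q)$, which is \eqref{condi;main} for the admissible $q$ it provides. I expect this last bookkeeping to be the main obstacle: carrying out the determinant computation and verifying that it reduces to the single clean bound $\kappa_2<\mu_1a_1 f(q)$, so that the window $(\mu_1a_1(s+qt),\,\mu_1a_1 f(q))$ is nonempty exactly under \eqref{condi;main}. The differentiation, integration by parts, and positive-definiteness verification are then routine.
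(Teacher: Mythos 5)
Your proposal is correct and follows essentially the same route as the paper: up to the overall factor $a_2\mu_2$, your functional with $\kappa_1=\frac{\mu_1a_1q}{\mu_2a_2}$ and $\kappa_2=\delta/(a_2\mu_2)$ is exactly the paper's $E=a_2\mu_2A+qa_1\mu_1B+\delta C$, and you use the same splitting, the same Young estimate with $\chi_i\le M_i$, and the same application of Lemma \ref{lem;quadra} to the zeroth-order form. Your explicit window $\kappa_2\in\bigl(\mu_1a_1(s+qt),\,\mu_1a_1f(q)\bigr)$ is in fact slightly cleaner than the paper's stated choice of $\delta$ (whose lower bound is missing a factor $1/d_3$, evidently a typo), since yours is precisely what \eqref{soon;I2}-type positivity and the determinant condition require and is nonempty exactly under \eqref{condi;main}.
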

\begin{proof}
In light of \eqref{condi;main} 
we can take $q>0$ such that 
\begin{align*}
  &4q-(1+q)^2 a_1a_2 > 0
\end{align*}
and
\begin{align*}
  \frac{\uast M_1^2}{4d_1d_3a_1\mu_1} 
  + \frac{\vast qM_2^2}{4d_2d_3a_2\mu_2}
  < \frac{\gamma (4q - (1+q)^2a_1a_2)}
    {a_1\alpha^2 q + a_2\beta^2 -a_1a_2\alpha\beta (1+q)}
\end{align*}
hold and $\delta>0$ satisfying 
\begin{align*}
  \frac{\uast a_2\mu_2M_1^2}{4d_1} 
  + \frac{\vast qa_1\mu_1M_2^2}{4d_2}
  < 
  \delta 
  < \frac{\gamma a_1\mu_1a_2\mu_2 (4q - (1+q)^2a_1a_2)}
    {a_1\alpha^2 q + a_2\beta^2  -a_1a_2\alpha\beta (1+q)}.
\end{align*}
For all $t>0$ we denote by $A(t)$, $B(t)$, $C(t)$ 
the functions defined as 
\begin{align*}
  &A(t):=\int_\Omega \left(
  u\cd-u^\ast-u^\ast \log \frac{u\cd}{u^\ast}
  \right), \quad
\\
  &B(t):=\int_\Omega \left(
   v\cd-v^\ast-v^\ast \log \frac{v\cd}{v^\ast}
   \right), \quad
\\
   & C(t):= \frac{1}{2}\int_\Omega \left(
   w\cd-w^\ast
   \right)^2,
 \end{align*}
and shall confirm that the function 
$E(t)$ defined as 
\begin{align*}
  E(t) 
  := a_2\mu_2 A(t)
  + qa_1\mu_1 B(t)
  + \delta C(t)
\end{align*}
satisfies \eqref{case1imineq} with some $\ep>0$. 
Firstly 
the Taylor formula 
enables us to see that $E(t)$ is a 
nonnegative function for $t>0$ 
(for more details, see \cite[Lemma 3.2]{Bai-Winkler_2016}). 
From the straightforward calculations  we infer 
\begin{align*}
  \frac{d}{dt}A(t)&=
  -\mu_1\int_\Omega (u-u^\ast)^2
  -a_1\mu_1\int_\Omega (u-u^\ast)(v-v^\ast)
  -d_1u^\ast\int_\Omega\frac{|\nabla u|^2}{u^2}
\\
  &\quad\,
  +u^\ast\int_\Omega\frac{\chi_1(w)}{u}
  \nabla u\cdot\nabla w, 
\\
  \frac{d}{dt}B(t)&=
  -\mu_2\int_\Omega (v-v^\ast)^2
  -a_2\mu_2\int_\Omega (u-u^\ast)(v-v^\ast)
  -d_2v^\ast\int_\Omega\frac{|\nabla v|^2}{v^2}
\\
  &\quad\,
  +v^\ast\int_\Omega\frac{\chi_2(w)}{v}
  \nabla v\cdot\nabla w
  , \\
  \frac{d}{dt}C(t)&=
  \alpha \int_\Omega  (u-u^\ast)(w-w^\ast)
  + \beta \int_\Omega  (v-v^\ast)(w-w^\ast)
  - \gamma \int_\Omega  (w-w^\ast)^2
\\
  &\quad\,-d_3\int_\Omega |\nabla w|^2. 
\end{align*}
Hence we have 
\begin{align*}
  \frac{d}{dt}E(t)= I_1(t)+I_2(t),
\end{align*}
where
\begin{align*}
  I_1(t)&:= -a_2\mu_1\mu_2 \int_\Omega (u-u^\ast)^2 
  -a_1a_2\mu_1\mu_2(1+q)
  \int_\Omega (u-u^\ast)(v-v^\ast)
\\
  & \quad\, \
  +\delta \alpha \int_\Omega (u-u^\ast)(w-w^\ast)
  -a_1\mu_1\mu_2 q \int_\Omega (v-v^\ast)^2
\\ &\quad\,\
  +\delta \beta \int_\Omega (v-v^\ast)(w-w^\ast) 
  -\delta \gamma \int_\Omega (w-w^\ast)^2
\end{align*}
and
\begin{align*}
I_2(t)&:=
  -d_1a_2\mu_2u^\ast 
  \int_\Omega\frac{|\nabla u|^2}{u^2}
  +a_2\mu_2u^\ast \int_\Omega\frac{\chi_1(w)}{u}
  \nabla u\cdot\nabla w
  -d_2a_1\mu_1 v^\ast q 
  \int_\Omega\frac{|\nabla v|^2}{v^2}
\\
  &\quad\,\   
  + a_1\mu_1v^\ast q \int_\Omega\frac{\chi_2(w)}{v}
  \nabla v\cdot\nabla w
  -d_3\delta \int_\Omega |\nabla w|^2.
\end{align*}
In order to confirm that there is $\ep_1>0$ such that 
\begin{align}\label{I3ineq}
  I_1(t)\leq -\ep_1\left(
  \int_\Omega (u\cd-u^\ast)^2
  +\int_\Omega (v\cd-v^\ast)^2
  +\int_\Omega (w\cd-w^\ast)^2
  \right)
\end{align}
for all $t>0$ 
we will see that the assumption of Lemma \ref{lem;quadra} 
is satisfied with 
\[
  a = a_2\mu_1\mu_2, 
\quad  
  b = a_1a_2\mu_1\mu_2 (1+q), 
\quad  
  c = -\delta \alpha, 
\quad 
  d = a_1\mu_1\mu_2 q, 
\quad  
  e = -\delta \beta, 
\quad 
  f = \delta \gamma
\]
and 
\[
  y_1 = u(x,t) - \uast,
\quad
  y_2 = v(x,t) - \vast,
\quad
  y_3 = w(x,t) - \wast.
\]
From the definitions of $q,\delta>0$ we can see that 
\begin{align*}
  a = a_2\mu_1\mu_2>0,
\quad
  ad-\frac{b^2}{4}
  = \frac{a_1a_2\mu_1^2\mu_2^2}{4} 
  (4q - (1+q)^2a_1a_2)
  >0 
\end{align*}
and 
\begin{align*}
  &adf + \frac{bce}{4} - \frac{cd^2}{4} 
  - \frac{b^2 f}{4} -\frac{ae^2}{4}
\\
  &= \frac{\mu_1\mu_2 \delta}{4} 
  \big(
  a_1a_2\mu_1\mu_2\gamma (4q - (1+q)^2 a_1a_2) 
  - (a_1\alpha^2 q + a_2\beta^2 - (1+q)a_1a_2\alpha\beta)\delta
  \big)>0. 
\end{align*}
Hence thanks to Lemma \ref{lem;quadra}, 
there exists $\ep_1>0$ such that 
\eqref{I3ineq} holds. 
We next verify that there is $\ep_2 >0$ such that 
\begin{align}\label{I4ineq}
  I_2(t)\leq -\ep_2 \int_\Omega |\nabla w\cd|^2 
  \quad \mbox{for all}\ t>0. 
\end{align}
By virtue of the Young inequality, 
we infer from \eqref{condi;chi} that 
\begin{align*}
  a_2\mu_2u^\ast
  \int_\Omega \frac{\chi_1(w)}{u}
  \nabla u\cdot \nabla w
  &\leq 
  d_1a_2\mu_2 \uast 
  \int_\Omega \frac{|\nabla u|^2}{u^2}
  + \frac{a_2\mu_2 \uast M_1^2}{4d_1}
  \int_\Omega |\nabla w|^2
\end{align*}
and
\begin{align*}
  a_1\mu_1 q v^\ast\int_\Omega 
  \frac{\chi_2(w)}{v}\nabla v\cdot \nabla w
  &\leq 
  d_2a_1\mu_1q\vast 
  \int_\Omega \frac{|\nabla v|^2}{v^2}
  + 
  \frac{a_1\mu_1 \vast q M_2^2}{4d_2}
  \int_\Omega |\nabla w|^2, 
\end{align*}
which implies that 
\begin{align}\label{soon;I2}
  I_2(t) \le 
  -\left(
  d_3\delta - \frac{a_2\mu_2 \uast M_1^2}{4d_1} 
  - \frac{a_1\mu_1 \vast q M_2^2}{4d_2}
  \right) 
  \into |\nabla w\cd|^2
  \quad 
  \mbox{for all}\ t>0. 
\end{align}
Therefore plugging the 
definition of $\delta>0$ into \eqref{soon;I2} 
leads to the existence of $\ep_2>0$ such that 
\eqref{I4ineq} holds, which concludes the proof of this lemma.  
\end{proof}

%
%
%
%

Then we have the following desired estimate. 

\begin{lem}\label{lem;decay;Linfty}
Let $a_1,a_2\in (0,1)$ and assume that 
\eqref{condi;chi}{\rm --}\eqref{ini} 
are satisfied. 
Then there exist $C > 0$ and $\ell > 0$ such that 
\begin{align*}
\lp{\infty}{u\cd-\uast}
+\lp{\infty}{v\cd-\vast}
+\lp{\infty}{w\cd-\wast}
\le Ce^{-\ell t}
\quad \mbox{for all}\ t>0.
\end{align*}
\end{lem}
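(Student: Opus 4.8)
The plan is to upgrade the $L^2$-type differential inequality furnished by Lemma \ref{case1ena} into the asserted exponential $L^\infty$ decay, proceeding in two stages: first establish qualitative stabilization $\lp{2}{u\cd-\uast}+\lp{2}{v\cd-\vast}+\lp{2}{w\cd-\wast}\to 0$, and only then bootstrap to an exponential rate.

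First I would integrate \eqref{case1imineq} over $(1,\infty)$. Since $E$ is nonnegative and nonincreasing it is bounded, whence
\[
 \int_1^\infty \Big(\lp{2}{u(\cdot,s)-\uast}^2 + \lp{2}{v(\cdot,s)-\vast}^2 + \lp{2}{w(\cdot,s)-\wast}^2\Big)\,ds < \infty.
\]
To pass from finiteness of this integral to decay, I would invoke the assumed uniform $C^{2+\theta,1+\frac{\theta}{2}}$ bounds: they make $s\mapsto \lp{2}{u(\cdot,s)-\uast}^2$ (and its $v,w$ analogues) uniformly continuous on $[1,\infty)$, so the finite integral forces each to tend to $0$ as $s\to\infty$. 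A Gagliardo--Nirenberg interpolation between the uniform $W^{1,\infty}(\Omega)$ bound and the vanishing $L^2$ norm then yields $\lp{\infty}{u\cd-\uast}\to 0$, and likewise for $v$ and $w$.

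Second, for the rate, I would exploit that the convex functionals defining $E$ are locally quadratic at the equilibrium. Writing $S(t):=\lp{2}{u\cd-\uast}^2+\lp{2}{v\cd-\vast}^2+\lp{2}{w\cd-\wast}^2$, a second-order Taylor expansion of $s\mapsto s-\uast-\uast\log(s/\uast)$ about $s=\uast$ (whose first derivative vanishes there and whose second derivative is positive) gives two-sided pointwise bounds once $u$ lies in a fixed neighbourhood of $\uast$ bounded away from $0$; integrating these, and using that $C(t)=\tfrac12\lp{2}{w\cd-\wast}^2$ exactly, produces constants $0<k_1\le k_2$ with $k_1 S(t)\le E(t)\le k_2 S(t)$ for all $t$ past some $t_0$, the existence of $t_0$ being guaranteed by the stage-one convergence. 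Combining $E(t)\le k_2 S(t)$ with \eqref{case1imineq} gives $\frac{d}{dt}E(t)\le -\ep S(t)\le -\frac{\ep}{k_2}E(t)$ for $t\ge t_0$, so a Gronwall argument yields $E(t)\le E(t_0)e^{-\frac{\ep}{k_2}(t-t_0)}$; the lower bound $E(t)\ge k_1 S(t)$ then delivers $S(t)\le Ce^{-\ell_0 t}$ for some $\ell_0>0$ on $[t_0,\infty)$, and on all of $(0,\infty)$ after enlarging $C$ using the uniform bounds on the compact interval $[0,t_0]$.

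Finally I would convert this $L^2$ exponential decay into the claimed $L^\infty$ decay through the interpolation
\[
 \lp{\infty}{z\cd} \le C\,\Vert z\cd\Vert_{W^{1,\infty}(\Omega)}^{\,a}\,\lp{2}{z\cd}^{\,1-a}
\]
applied to $z\in\{u-\uast,\,v-\vast,\,w-\wast\}$ with a suitable $a\in(0,1)$ depending on $n$; the uniform boundedness of the $W^{1,\infty}$ norms together with $S(t)\le Ce^{-\ell_0 t}$ gives the estimate with $\ell=\tfrac{(1-a)\ell_0}{2}$. The main obstacle is that one cannot close an exponential estimate directly from the energy inequality, because $E$ controls $S$ only after $u,v$ have entered a neighbourhood of the equilibrium, where the Taylor comparison is valid; thus the genuinely essential step is the qualitative stabilization of stage one, which justifies the two-sided comparison $E\sim S$ needed to run the Gronwall argument.
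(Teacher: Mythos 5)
Your proposal is correct and takes essentially the same route as the paper: the paper's proof of this lemma simply defers to the arguments of \cite[Theorems 3.3, 3.6 and 3.7]{Bai-Winkler_2016}, which are exactly the three stages you spell out (integrating \eqref{case1imineq} and using uniform continuity plus interpolation with the assumed regularity bounds to get qualitative stabilization, then the two-sided Taylor comparison of $E$ with the squared $L^2$ distances near the equilibrium combined with a Gronwall argument for the exponential $L^2$ rate, and finally Gagliardo--Nirenberg interpolation to convert this into exponential $L^\infty$ decay). Your write-up is a faithful expansion of that cited argument, with no gaps.
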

\begin{proof}
The same arguments as in the proofs of 
\cite[Theorems 3.3, 3.6 and 3.7]{Bai-Winkler_2016} 
enable us to obtain this lemma. 
\end{proof}

\begin{proof}[{\rm \bf Proof of Theorem \ref{mainth}}]
Lemma \ref{lem;decay;Linfty} immediately leads to 
Theorem \ref{mainth}. 
\end{proof}
\begin{proof}[{\rm \bf Proof of Theorems \ref{mainth2} and \ref{mainth3}}] 
A combination of results concerned with 
global existence and boundedness in \eqref{cp} 
(\cite{Bai-Winkler_2016,Lin-Mu-Wang,Mizukami_DCDSB}) 
and the standard parabolic regularity 
argument, along with Theorem \ref{mainth} 
implies Theorems \ref{mainth2} and \ref{mainth3}. 
\end{proof}
%
%

\section{Discussions}\label{discussion}
In this section we shall confirm that the condition 
\eqref{condi;main}
improves the conditions assumed in the previous works 
aided by the view of \eqref{rewrite;Bai-Winkler} 
and \eqref{rewrite;Mizukami}.  
Here since  
\begin{align*}
  \left\{
  (s,t)\in \mathbb{R}^2 \,\Big|\, 
  0\le s< \frac{f(q)}{1+q},\ 
  0\le t < \frac{f(q)}{1+q}
  \right\}\subset 
  \{(s,t)\in \mathbb{R}^2 \,|\, 
  s,t \ge 0,\ s + qt < f(q)\}
\end{align*}
holds for every $q\in I$, 
we can see that the condition 
\eqref{condi;main} improves the conditions assumed in 
\cite{Mizukami_DCDSB}. 
In order to accomplish the purpose of this section, 
noting that 
\begin{align*}
  &\{(s,t)\in \mathbb{R}^2 \,|\, 
  s,t \ge 0,\ s + qt < f(q)\}
\\
  &= \bigcup_{k \in [0,1]}
  \left\{
  (s,t)\in \mathbb{R}^2 \,\Big|\, 
    0 \le s < kf(q),\ 
    0\le t < (1-k)\frac{f(q)}{q} 
  \right\}
    =: R_{q},
\end{align*}
we will confirm that  
\begin{align}\label{purpose;section3}
  R_{1} \subsetneq \bigcup_{q\in I}R_{q}.
\end{align}
To see \eqref{purpose;section3} we shall see that 
$q=1$ is not the best choice, i.e., 
$q= 1$ is not a maximizer of the functions 
$f(q)$ and 
\begin{align*}
  g(q):= \frac{f(q)}{q}
  \quad \mbox{for}\ 
  q\in I. 
\end{align*}
From the straightforward calculations we infer that 
\begin{align}\notag
  &\frac{d f}{d q}(1) = 
  \frac{4\gamma (1-a_1a_2)(\beta- a_1\alpha)}
  {(a_1\alpha^2 + a_2\beta^2 -2a_1a_2\alpha\beta)^2},
\\\label{der;gh}
  &\frac{d g}{d q}(1) = 
  \frac{4\gamma (1-a_1a_2)(a_2\beta - \alpha)}
  {(a_1\alpha^2 + a_2\beta^2 -2a_1a_2\alpha\beta)^2}. 
\end{align}
Now we will divide into two cases and show that 
\eqref{purpose;section3} holds for each cases. 
\vspace*{2mm}
%
%
%
%

\noindent
\underline{\bf Case 1: 
$a_1\alpha \neq \beta$ and $a_2\beta \neq \alpha$.} \ 
In this case, by virtue of \eqref{der;gh}, we can see that 
\begin{align*}
  \frac{d f}{d q}(1) \neq 0 
\quad \mbox{and} \quad 
  \frac{d g}{d q}(1) \neq 0, 
\end{align*}
which means that $q=1$ is not a maximizer of 
$f$ and $g$. 
Thus one of the following two properties holds: 
\begin{align*}
&\mbox{\rm (Case 1-1)}\quad \exists\, q_1 \in I;\quad 
  f(q_1) > f(1), \quad g(q_1) > g(1),
\\
&\mbox{\rm (Case 1-2)}\quad \forall\, q \in I;\quad 
  f(q) > f(1),\quad g(q) \le g(1) \quad\mbox{or}
\\ 
  &\hspace{41mm} 
  f(q) \le f(1),\quad g(q) > h(1).
\end{align*}
Therefore we obtain that \eqref{purpose;section3} holds in this case. 
\vspace*{2mm}
%
%
%
%

\noindent
\underline{\bf Case 2: 
$a_1\alpha = \beta$ or $a_2\beta = \alpha$.} \
We first deal with the case that 
$a_1\alpha = \beta$. 
In this case, in view of the fact that 
\begin{align*}
f(q) = - \frac{\gamma a_1a_2}{\alpha (1-a_1a_2)}\cdot q
+ \frac{2\gamma (2-a_1a_2)}{\alpha (1-a_1a_2)} 
- \frac{\gamma a_1a_2}{\alpha (1-a_1a_2)}\cdot \frac{1}{q},
\end{align*}
$q=1$ is a maximizer of $f$. 
On the other hand, thanks to \eqref{der;gh} 
together with the fact that 
$\alpha - a_2 \beta = \alpha (1-a_1a_2)>0$, 
we can see that 
$q=1$ is not a maximizer of $g$; 
there is $q_2\in I$ such that 
\begin{align*}
g(q_2) > g(1). 
\end{align*}
Similarly, in the case that $a_2\beta = \alpha$, 
we infer that $q=1$ is a maximizer of $g$ and 
is not a maximizer of $f$; there exists $q_3\in I$ 
such that 
\begin{align*}
f(q_3) > f(1). 
\end{align*}
Hence we derive that \eqref{purpose;section3} holds also in this case. 

According to the above two cases, we can obtain that 
\eqref{purpose;section3} holds, 
which means the condition 
\eqref{condi;main} 
improves the conditions assumed in \cite{Bai-Winkler_2016}. 
Therefore we attain the purpose of this paper. 
%


\newpage

\begin{thebibliography}{10}

\bibitem{Bai-Winkler_2016}
X.~Bai and M.~Winkler.
\newblock Equilibration in a fully parabolic two-species chemotaxis system with
  competitive kinetics.
\newblock {\em Indiana Univ.\ Math.\ J.}, 65:553--583, 2016.

\bibitem{Black-Lankeit-Mizukami_01}
T.~Black, J.~Lankeit, and M.~Mizukami.
\newblock On the weakly competitive case in a two-species chemotaxis model.
\newblock {\em IMA J.\ Appl.\ Math.}, 81:860--876, 2016.

\bibitem{Fujie_2015}
K.~Fujie.
\newblock Boundedness in a fully parabolic chemotaxis system with singular
  sensitivity.
\newblock {\em J. Math.\ Anal.\ Appl.}, 424:675--684, 2015.

\bibitem{Lin-Mu-Wang}
K.~Lin, C.~Mu, and L.~Wang.
\newblock Boundedness in a two-species chemotaxis system.
\newblock {\em Math.\ Methods Appl.\ Sci.}, 38:5085--5096, 2015.

\bibitem{Mizukami_AIMSmath}
M.~Mizukami.
\newblock Remarks on smallness of chemotactic effect for asymptotic stability
  in a two-species chemotaxis system.
\newblock {\em AIMS Mathematics}, 1:156--164, 2016.

\bibitem{Mizukami_DCDSB}
M.~Mizukami.
\newblock Boundedness and asymptotic stability in a two-species
  chemotaxis-competition model with signal-dependent sensitivity.
\newblock {\em Discrete Contin.\ Dyn.\ Syst.\ Ser.\ B}, 22:2301--2319, 2017.

\bibitem{Mizukami_PPEpro}
M.~Mizukami.
\newblock Boundedness and stabilization in a two-species chemotaxis-competition
  system of parabolic-parabolic-elliptic type.
\newblock submitted. arXiv:1703.08389 [math.AP]. 


\bibitem{Mizukami-Yokota_01}
M.~Mizukami and T.~Yokota.
\newblock Global existence and asymptotic stability of solutions to a
  two-species chemotaxis system with any chemical diffusion.
\newblock {\em J. Differential Equations}, 261:2650--2669, 2016.

\bibitem{N-T_SIAM}
M.~Negreanu and J.~I. Tello.
\newblock On a two species chemotaxis model with slow chemical diffusion.
\newblock {\em SIAM J. Math.\ Anal.}, 46:3761--3781, 2014.

\bibitem{N-T_JDE}
M.~Negreanu and J.~I. Tello.
\newblock Asymptotic stability of a two species chemotaxis system with
  non-diffusive chemoattractant.
\newblock {\em J. Differential Equations}, 258:1592--1617, 2015.

\bibitem{stinner_tello_winkler}
C.~Stinner, J.~I. Tello, and M.~Winkler.
\newblock Competitive exclusion in a two-species chemotaxis model.
\newblock {\em J. Math.\ Biol.}, 68:1607--1626, 2014.

\bibitem{Tello_Winkler_2012}
J.~I. Tello and M.~Winkler.
\newblock Stabilization in a two-species chemotaxis system with a logistic
  source.
\newblock {\em Nonlinearity}, 25:1413--1425, 2012.

\bibitem{Zhang-Li_2015}
Q.~Zhang and Y.~Li.
\newblock Global boundedness of solutions to a two-species chemotaxis system.
\newblock {\em Z. Angew.\ Math.\ Phys.}, 66:83--93, 2015.

\end{thebibliography}


\end{document}